\def\cvd{~\vbox{\hrule\hbox{%
     \vrule height1.3ex\hskip0.8ex\vrule}\hrule } }
\newcommand{\HE}{Namie of Handling Editor}
\newcommand{\DoS}{Month/Day/Year}
\newcommand{\DoA}{Month/Day/Year}
\newcommand{\Names}{Andrii Dmytryshyn}
\newcommand{\Title}{Versal deformations: a tool of linear algebra}
\newtheorem{remark}[theorem]{Remark}
\newtheorem{example}[theorem]{Example}
\newcommand{\reals}{\mathbb{R}}
\newcommand{\complex}{\mathbb{C}}
\DeclareMathOperator{\codim}{codim}
\DeclareMathOperator{\bun}{\cal B}
\definecolor{codegreen}{rgb}{0,0.6,0}
\definecolor{codegray}{rgb}{0.5,0.5,0.5}
\definecolor{codepurple}{rgb}{0.58,0,0.82}
\definecolor{backcolour}{rgb}{0.95,0.95,0.92}
\lstdefinestyle{mystyle}{
    backgroundcolor=\color{backcolour},   
    commentstyle=\color{codegreen},
    keywordstyle=\color{magenta},
    numberstyle=\tiny\color{codegray},
    stringstyle=\color{codepurple},
    basicstyle=\ttfamily\footnotesize,
    breakatwhitespace=false,         
    breaklines=true,                 
    captionpos=b,                    
    keepspaces=true,                 
    numbers=left,                    
    numbersep=5pt,                  
    showspaces=false,                
    showstringspaces=false,
    showtabs=false,                  
    tabsize=2
}
\begin{document}

\bibliographystyle{plain}

%  Leave these commented lines here
%\input{ELAheader-template.tex}
% ELA insert correct page number
\setcounter{page}{1}

\thispagestyle{empty}

%Insert the title of the paper
 \title{\Title\thanks{Received
 by the editors on \DoS.
 Accepted for publication on \DoA. 
 Handling Editor: \HE. Corresponding Author: Andrii Dmytryshyn}}

\author{
Andrii Dmytryshyn\thanks{School of Science and Technology,
\"Orebro University (andrii.dmytryshyn@oru.se). The author is supported by the Swedish Research Council (VR) grant 2021-05393.}
% Remember to put \and between any two authors
}

\markboth{\Names}{\Title}

\maketitle

\begin{abstract}
Versal deformation of a matrix $A$ is a normal form to which all matrices $A+E$, close to $A$, can be reduced by similarity transformation smoothly depending on the entries of $A+E$. In this paper we discuss versal deformations and their use in codimension computations, in investigation of closure relations of orbits and bundles, in studying changes of canonical forms under perturbations, as well as in reduction of unstructured perturbations to structured perturbations.

\end{abstract}

\begin{keywords}
Perturbation, Versal deformation, Jordan Canonical Form, Canonical form, Similarity, Codimension 
\end{keywords}
\begin{AMS}
15A21, 15A63. 
\end{AMS}

% Sample article for the Electronic Journal of Linear Algebra

%%%%%%%%%%%%%%%%%%%%%%%%%%%%%%%%%%%%%%%

%%%%%%%%%%%%%%%%%%%%%%%%%%%%%%%%%%%%%%%%%%%%%%%%%%%%%%%%%%%%%

\section{Introduction}
%Matrix analysis is one of the central fields in mathematics that has numerous applications in various scientific and engineering domains. 
%Linear algebra is one of the major contributors to a mathematical framework for understanding and analyzing complex systems modeled by matrices. 
%and helps to develop efficient numerical algorithms for handling these systems. 
%Building such a framework demands knowledge about properties of matrices. In turn, for understanding some properties of matrices we may need to transform these matrices to simpler forms, e.g., Jordan canonical form (JCF) or Weyr canonical form. 
%
%Matrices are often used in modelling of complex systems. Thus understanding and analyzing these systems demands knowledge about the properties of matrices. In turn, 

To investigate and understand the properties of a matrix we may need to transform this matrix to a simpler form, e.g., Jordan canonical form (JCF) or Weyr canonical form. 
However, the reduction to these forms may be an unstable operation, meaning that both the corresponding canonical form and the reduction transformation can be highly sensitive to small changes in the entries of a given matrix. In the following we recall JCF and illustrate its sensitivity. 

Let $A$ be an $ n\times n$ matrix over the field of complex numbers, denoted by $\mathbb{C}$, and $GL_n(\mathbb{C})$ be a group of $n \times n$ nonsingular complex matrices. Recall that the similarity transformation is defined as follows 
\begin{equation*}
A  \mapsto S^{-1}AS, \text{ where } S \in GL_n(\mathbb{C}).
\end{equation*}
By similarity transformation any matrix can be reduced to its JCF, i.e., for a matrix $A$ there exists a nonsingular matrix $S$, such that $S^{-1}AS = J_{\text{can}}$, where 
%$J_{\text{can}}$ is defined as follows 
\begin{equation}\label{jcf}
\begin{aligned}
J_{\text{can}}&=J(\lambda_1) \oplus J(\lambda_2) \oplus \dots \oplus J(\lambda_n), \\ J(\lambda_i)&=  \bigoplus_{j=1}^{t_i} J_{k_{i,j}}(\lambda_i), \quad \lambda_i \neq \lambda_j, \quad k_{i,1} \geq k_{i,2} \geq \dots \geq k_{i,t_i}, \text{ and }\\
 J_{k_{i,j}}(\lambda_i) &= 
\begin{bmatrix}
\lambda_i&1&&0 \\
& \lambda_i &\ddots& \\
&& \ddots& 1\\
0&&& \lambda_i
\end{bmatrix}, \  \  (\text{the size is } k_{i,j} \times k_{i,j}),
\end{aligned}
\end{equation} 
for more details, see, e.g., \cite{HoJo85}. The JCF of a matrix is well known classical result that has been studied with various purposes. In examples \ref{ex1}  and \ref{ex2}, we compute JCF of some matrices. 
\begin{example}
\label{ex1}
Consider the following perturbation of the diagonal matrix 
\begin{equation*}
A(\varepsilon,\delta)=
\begin{bmatrix}
\lambda&0&0 \\
0& \lambda &0 \\
0&0& \lambda
\end{bmatrix} + 
\begin{bmatrix}
0&\varepsilon&0 \\
0& 0 &\delta \\
0&0& 0
\end{bmatrix}. 
\end{equation*} 
What is JCF of $A(\varepsilon,\delta)$? In the case of $\varepsilon \neq 0$ and $ \delta \neq 0$ (but possibly being very small), a simple computation gives us the following JCF:  
\begin{equation*}
\begin{bmatrix}
1&0&0 \\
0& \varepsilon^{-1}&0 \\
0&0& \varepsilon^{-1}\delta^{-1}
\end{bmatrix}^{-1}
\begin{bmatrix}
\lambda&\varepsilon&0 \\
0& \lambda &\delta \\
0&0& \lambda
\end{bmatrix}
\begin{bmatrix}
1 &0&0 \\
0& \varepsilon^{-1}&0 \\
0&0& \varepsilon^{-1}\delta^{-1}
\end{bmatrix} =
\begin{bmatrix}
\lambda&1&0 \\
0& \lambda &1 \\
0&0& \lambda
\end{bmatrix} = J_3(\lambda).
\end{equation*}
While, in the case of $\varepsilon \neq 0$ and $\delta = 0$, we obtain 
\begin{equation*}
\begin{bmatrix}
1 &0&0 \\
0& \varepsilon^{-1}&0 \\
0&0& 1
\end{bmatrix}^{-1}
\begin{bmatrix}
\lambda&\varepsilon&0 \\
0& \lambda &0 \\
0&0& \lambda
\end{bmatrix}
\begin{bmatrix}
1 &0&0 \\
0& \varepsilon^{-1}&0 \\
0&0& 1
\end{bmatrix} =
\begin{bmatrix}
\lambda&1&0 \\
0& \lambda &0 \\
0&0& \lambda
\end{bmatrix} = J_2(\lambda) \oplus J_1(\lambda).
\end{equation*}
The above shows that the small change in one entry of the matrix, i.e., $\delta$ being arbitralily small but non zero versus $\delta=0$, causes a big effect on the JCF ($J_3(\lambda)$ versus $J_2(\lambda)+J_1(\lambda)$).  
\end{example} 

\begin{example}
\label{ex2}
Consider a perturbation of $J_3(0)$ and compute its JCF using a matrix $S(\varepsilon)$: 
\begin{equation*}
S(\varepsilon)^{-1}\begin{bmatrix}
\lambda&1&0 \\
0& \lambda &1 \\
\varepsilon&0& \lambda
\end{bmatrix}S(\varepsilon) = 
\begin{cases}
\begin{bmatrix}
\lambda_1&0&0 \\
0& \lambda_2 &0 \\
0&0& \lambda_3
\end{bmatrix}, \quad \text{ if } \varepsilon \neq 0,\\
\begin{bmatrix}
\lambda&1&0 \\
0& \lambda &1 \\
0&0& \lambda
\end{bmatrix}, \quad  \text{ if } \varepsilon = 0.
\end{cases} 
\end{equation*} 
In this example the perturbation changes both the sizes of the Jordan blocks and the values of the eigenvalues. 
\end{example}
As can be seen from examples \ref{ex1} and \ref{ex2},  reduction to JCF is an unstable operation: the computed canonical form and the reduction transformation depend discontinuously on the entries of an original matrix. To put it simply, small changes in the entries of an original matrix can cause a big change in its JCF and the transformation that reduces the matrix to this JCF. So reduction of a matrix to its JCF is an ill-posed problem. 
As a consequence of the ill-posedness of reduction to JCF, V.I. Arnold introduced a normal form, with the minimal number of independent parameters, to which an arbitrary family of matrices $\tilde{A}$ close to a given matrix $A$ can be reduced by similarity transformations smoothly depending on the entries of $\tilde{A}$. He called such a normal form a miniversal deformation of $A$ \cite{Arno71,Arno72,Arno97}. 
%A good form to which not only our matrix but all matrices from some neighborhood of our matrix can be reduced continuously.
%To address this issue for JCF, V.I. Arnold introduced the concept of miniversal deformations \cite{Arno71}, which provides a normal form with the minimal number of independent parameters to which an arbitrary family of matrices, close to a given matrix, can be reduced by similarity transformations smoothly depending on the entries of the matrices.

The rest of the paper is organized as follows. In Section \ref{secdef} we recall the necessary definitions. In Section \ref{seccodim}, we explore the use of miniversal deformations in codimension computations. The characterization of closure relations for bundles of matrices and investigation of possible changes in the JCF (eigenstructures) of matrices in response to small perturbations are discussed in Section \ref{secclo}. Section \ref{secred} provides an algorithm for the reduction of unstructured perturbations to structured perturbations for monic matrix polynomials. 

%We also present a new proof for the characterization of closure relations for bundles of matrices (see Theorem \ref{defbun}).  
The main goal of this paper is to present, as simply as possible, various known uses of miniversal deformations, but the paper also contains some new developments.  Namely, our proof of Theorem \ref{defbun}, about the characterization of closure relations for bundles of matrices, has not appeared in the literature before. Neither has Algorithms \ref{alg}, but it solves a problem that is a partial case of already solved problem, see Section \ref{secred} for more details. The advantage of using  Algorithms \ref{alg} (when possible) is that it deals with matrices (rather than matrix pencils) and thus requires less memory and computations.

\section{Versal deformations of JCF} \label{secdef}

In this section we present a formal definition of (mini)versal deformations and a theorem that provides miniversal deformations of JCF. We also present illustrative examples. 

\begin{definition}
A {\it deformation} of a matrix
$A\in{\mathbb C}^{n\times n}$ is a
holomorphic map ${\cal A}\colon
\Lambda\to {\mathbb C}^{n\times n}$ in
which $\Lambda\subset \mathbb C^k$ is a
neighborhood of $\vec 0=(0,\dots,0)$
and ${\cal A}(\vec 0)=A$.
\end{definition}
%definition of versality: The definition of a holomorphic function generalizes to several complex variables in a straightforward way. Let D to be polydisk and also, denote an open subset of Cn, and let f: D to C. The function f is analytic at a point p in D if there exists an open neighbourhood of p in which f is equal to a convergent power series in n complex variables. [Gunning and Rossi, Analytic Functions of Several Complex Variables, p. 2.] Define f to be holomorphic if it is analytic at each point in its domain. Osgood's lemma shows (using the multivariate Cauchy integral formula) that, for a continuous function f, this is equivalent to f being holomorphic in each variable separately (meaning that if any $n-1$ coordinates are fixed, then the restriction of f is a holomorphic function of the remaining coordinate). The much deeper Hartogs' theorem proves that the continuity assumption is unnecessary: f is holomorphic if and only if it is holomorphic in each variable separately.
\noindent On the set of deformations we have an equivalence relation induced by similarity, which is defined as follows.  
\begin{definition}
\label{eqdef}
Deformations ${\cal A}(\vec\varepsilon)$ and
${\cal B}(\vec\varepsilon)$ of a
matrix $A$ are called {\it equivalent} if the
identity matrix $I_n$ possesses a deformation
${\cal I}(\vec\varepsilon) $ such that
\begin{equation*}\label{kft}
\begin{split}
{\cal B}(\vec\varepsilon)=
{\cal I}(\vec\varepsilon)^{-1}
{\cal A}(\vec\varepsilon)
{\cal I}(\vec\varepsilon) 
\end{split}
\end{equation*}
for all $\vec\varepsilon=(\varepsilon_1,\dots, \varepsilon_k)$ in some neighborhood of $\vec 0$.
\end{definition}
%\begin{equation*}\label{kft}
%\begin{split}
%\left(\text{could also be }{\cal B}(\vec\lambda)={\cal I}(\vec\lambda)^{T}{\cal A}(\vec\lambda){\cal I}(\vec\lambda) \text{ \ for congruency} \right)\\
%\left(\text{could also be }{\cal B}(\vec\lambda)={\cal I}(\vec\lambda)^{*}{\cal A}(\vec\lambda){\cal I}(\vec\lambda) \text{ \ for $*$congruency} \right) \\
%\end{split}
%\end{equation*}
%For an isolated singularity f0 the problem is which singularities (with respect to this equivalence) can be obtained from f0 by its deformations. 
\noindent Using the equivalence relation from Definition \ref{eqdef} we are able to find such a deformation that any other deformation can be ``induced'' from it.
%For an isolated singularity f0 the problem is which singularities (with respect to this equivalence) can be obtained from f0 by its deformations. This leads to the notion of a versal deformation of f0 i.e. such one that any other deforma- tion of f0 can be "induced" from it.
\begin{definition}
\label{vdef}
A deformation ${\cal
A}(\delta_1,\dots,\delta_k)$ of a
matrix $A$ is called {\it versal} if
every deformation ${\cal
B}(\vec\varepsilon)={\cal
B}(\varepsilon_1,\dots,\varepsilon_l)$ of $A$ is
equivalent to a deformation of the form
${\cal A}(\varphi_1(\vec\varepsilon),\dots,
\varphi_k(\vec\varepsilon)),$ in which all
$\varphi_i(\vec\varepsilon)$ are power series
that are convergent in a neighborhood
of $\vec 0$ and $\varphi_i(\vec 0)=0$.
\end{definition}
\noindent If a versal deformation ${\cal A}(\delta_1,\dots,\delta_k)$ of a matrix $A$ has a minimal number of independent parameters, i.e., $k$ is minimal, then it is called {\it miniversal}. 
%The prefix ``mini-'' in the word ``miniversal'' is added if the number of independent parameters in the normal form is minimal. 

For constructing the miniversal  deformations of JCF we need to define the following $m \times n$
matrices 
\begin{equation*}
0_{mn}^{\leftarrow} = \begin{bmatrix}
\begin{matrix}
*\\ *\\\vdots\\ *
\end{matrix}
&0_{m,n-1}
\end{bmatrix}\quad
 \text{if $m\le n$ \ \  and \ \ }
0_{mn}^{\downarrow} = 
\begin{bmatrix}
\\[1mm]
   0_{m-1,n}
\\[1mm]
  *\ *\ \dots\ *
\end{bmatrix}\quad
\text{if $m\ge n$}, 
 \end{equation*}
where the stars denote all possibly non-zero entries. Further, we will usually omit the indices $m$ and $n$. 
Consider a matrix in the Jordan canonical form $J_{\text{can}}$, see \eqref{jcf}, and define 
\begin{equation}
\label{defJ}
\begin{aligned}
J_{\text{can}} + D&=(J(\lambda_1) + D_1) \oplus (J(\lambda_2)+ D_2) \oplus \dots \oplus (J(\lambda_n)+ D_n), \text{ where } \\ 
J(\lambda_i) + D_i&=  
%\footnotesize{
\begin{bmatrix}
J_{k_{i,1}}(\lambda_i) + 0^{\downarrow}&0^{\downarrow}&0^{\downarrow}& \dots&0^{\downarrow} \\
0^{\leftarrow}&J_{k_{i,2}}(\lambda_i)+ 0^{\downarrow}&0^{\downarrow} &  \dots & 0^{\downarrow} \\
0^{\leftarrow}&0^{\leftarrow}&J_{k_{i,3}}(\lambda_i)+0^{\downarrow}& \dots & 0^{\downarrow} \\
\vdots&\vdots&\vdots&\ddots&\vdots \\
0^{\leftarrow}&0^{\leftarrow}& 0^{\leftarrow} & \dots& J_{k_{i,t_i}}(\lambda_i) + 0^{\downarrow}\\
\end{bmatrix}.
\end{aligned}
\end{equation} 

Theorem \ref{mdarn} below shows that miniversal deformation of a matrix in JCF can be taken in the shape defined in \eqref{defJ}. 

\begin{theorem}[\cite{Arno71,GaSe99,KlSe14}] \label{mdarn}
Let $J_{\text{can}}$ be a matrix in Jordan canonical form. All matrices $J_{\text{can}}+E$ that are sufficiently close to $J_{\text{can}}$ can be reduced by transformations
\begin{equation*}\label{tef}
J_{\text{can}}+E\mapsto {
S}(E)^{-1} (J_{\text{can}}+E) {
S}(E),\quad\begin{matrix}
\text{${S}(E)$
is analytic at 0}\\
\text{and ${S}(0)=I$,}
\end{matrix}
\end{equation*}
to the form $J_{\text{can}} +D(E)$, defined in \eqref{defJ}, 
the non-zero entries of $D(E)$ (i.e., entries at the positions of the stars $*$) depend holomorphically on the entries of $E$.
%where each star $*$ denotes a function $\varphi_i(\vec\varepsilon)$ for some . 
\end{theorem}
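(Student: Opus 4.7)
The plan is to exhibit the space $\mathcal{D}$ of matrices with the star-pattern prescribed in \eqref{defJ} as a linear complement to the tangent space of the similarity orbit of $J_{\text{can}}$, and then invoke the holomorphic implicit function theorem to produce the analytic change of variables $S(E)$.

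First I would observe that the orbit $\mathcal{O}(J_{\text{can}}) = \{S^{-1} J_{\text{can}} S : S \in GL_n(\mathbb{C})\}$ is a complex submanifold whose tangent space at $J_{\text{can}}$ is the image of the commutator map
\begin{equation*}
T : \mathbb{C}^{n \times n} \to \mathbb{C}^{n \times n}, \qquad T(X) = J_{\text{can}} X - X J_{\text{can}},
\end{equation*}
whose kernel is the centralizer $Z(J_{\text{can}})$. The technical heart of the theorem is then the direct sum decomposition $\mathbb{C}^{n\times n} = \operatorname{Im}(T) \oplus \mathcal{D}$. I would verify this in two steps. A dimension count: reading off the stars in \eqref{defJ} block by block gives $\dim\mathcal{D} = \sum_i \sum_{a,b} \min(k_{i,a}, k_{i,b})$, which is the classical formula for $\dim Z(J_{\text{can}})$, hence equal to $\codim \operatorname{Im}(T)$. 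And a transversality check: if $D = T(X)$ for some $X$ and $D \in \mathcal{D}$, a blockwise analysis of $J_{\text{can}} X - X J_{\text{can}} = D$ using a shift-and-subtract rule on the entries within each pair of Jordan blocks of the same eigenvalue forces $D = 0$.

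With the decomposition in hand, I would apply the holomorphic implicit function theorem. Pick a linear complement $\mathcal{C}$ to $Z(J_{\text{can}})$ in $\mathbb{C}^{n\times n}$ and define
\begin{equation*}
\Phi : \mathcal{C} \times \mathcal{D} \to \mathbb{C}^{n \times n}, \qquad \Phi(X, D) = e^{-X}(J_{\text{can}} + D)\, e^{X} - J_{\text{can}}.
\end{equation*}
The derivative at $(0,0)$ sends $(\dot X, \dot D)$ to $T(\dot X) + \dot D$, a linear isomorphism $\mathcal{C} \oplus \mathcal{D} \to \operatorname{Im}(T) \oplus \mathcal{D} = \mathbb{C}^{n\times n}$, since $T|_{\mathcal{C}}$ is a bijection onto $\operatorname{Im}(T)$. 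Hence $\Phi$ is a local biholomorphism at the origin. The implicit function theorem then yields unique holomorphic maps $X(E), D(E)$ with $\Phi(X(E), D(E)) = E$ and $X(0) = 0$, $D(0) = 0$, so that $S(E) := e^{X(E)}$ is analytic at $0$ with $S(0) = I$ and satisfies $S(E)^{-1}(J_{\text{can}} + E)\, S(E) = J_{\text{can}} + D(E)$, with the non-zero (star) entries of $D(E)$ depending holomorphically on those of $E$.

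The main obstacle is the transversality step $\mathcal{D} \cap \operatorname{Im}(T) = 0$: the dimension count is routine, but showing that the specific downward and leftward star patterns in \eqref{defJ} are genuinely linearly independent modulo commutators requires careful bookkeeping of how $T$ propagates entries across Jordan blocks of the same eigenvalue. A clean route is to use the Frobenius inner product to identify $\operatorname{Im}(T)^\perp$ with the centralizer of $J_{\text{can}}^{\top}$, and then verify that the prescribed star positions occupy a set of matrix units pairing nondegenerately with a natural upper-triangular Toeplitz basis of that centralizer.
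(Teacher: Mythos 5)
The paper does not actually prove Theorem \ref{mdarn}; it quotes it from the cited literature and only records the underlying versality criterion as Lemma \ref{complemma} (namely that $A+D(E)$ is versal iff $\mathbb{C}^{n\times n}=T(A)+D(\mathbb{C})$). Your proposal is a correct outline of the standard proof of that cited result, and it is exactly the argument the paper's framework presupposes: you establish the decomposition demanded by Lemma \ref{complemma}, with directness of the sum giving minimality, and then convert the linear statement into the analytic one via the inverse/implicit function theorem applied to $\Phi(X,D)=e^{-X}(J_{\text{can}}+D)e^{X}-J_{\text{can}}$, whose differential at the origin is $(\dot X,\dot D)\mapsto T(\dot X)+\dot D$. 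The dimension count $\dim\mathcal{D}=\sum_i\sum_{a,b}\min(k_{i,a},k_{i,b})=\dim Z(J_{\text{can}})=\operatorname{codim}\operatorname{Im}(T)$ is right (the off-diagonal blocks between distinct eigenvalues contribute nothing because the Sylvester operator is invertible there), and your fallback route for the transversality step --- pairing $\mathcal{D}$ against the annihilator of $\operatorname{Im}(T)$ --- is the one Arnold uses. Two small points of care: the transversality claim $\mathcal{D}\cap\operatorname{Im}(T)=0$ is the entire content and is only sketched here, so in a full write-up you must actually exhibit the nondegenerate pairing between the star positions (last row of each diagonal/upper block, first column of each lower block) and the upper-triangular Toeplitz basis of the annihilator; and with the Hermitian Frobenius inner product the annihilator of $\operatorname{Im}(T)$ is the centralizer of $J_{\text{can}}^{*}$ rather than of $J_{\text{can}}^{\top}$ (with the bilinear trace pairing it is the centralizer of $J_{\text{can}}$ itself) --- the block pattern is the same in all cases, so the argument survives, but the statement as written is slightly off. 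Neither issue is a gap in the strategy.
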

\noindent To illustrate the result of Theorem \ref{mdarn} we consider the following three examples. 
\begin{example}  \label{mindefex} 
Theorem \ref{mdarn} tells us that the perturbed matrix 
\begin{equation*}
\begin{aligned}
&J_{\text{can}} +E = J_3(\lambda)\oplus J_2(\lambda)\oplus J_1(\lambda)\oplus J_2(\mu) +E \\ 
&=
%\footnotesize{ 
\left( \begin{array}{c|c|c|c}
\begin{matrix}
\lambda&1&0 \\
0& \lambda&1 \\
0&0& \lambda
\end{matrix}& & &\\
\hline
& \begin{matrix}
\lambda&1 \\
0& \lambda
\end{matrix}& &\\
\hline
& & \begin{matrix}
\lambda
\end{matrix} &\\
\hline
& & & \begin{matrix}
\mu&1 \\
0& \mu
\end{matrix} \end{array}
\right)
+
\left(
\begin{array}{ccc|cc|c|cc}
\varepsilon_{11}&\varepsilon_{12}&\varepsilon_{13}&\varepsilon_{14}&\varepsilon_{15}&\varepsilon_{16}&\varepsilon_{17}&\varepsilon_{18} \\
\varepsilon_{21}& \varepsilon_{22} &\varepsilon_{23}&\varepsilon_{24}&\varepsilon_{25}&\varepsilon_{26} &\varepsilon_{27}&\varepsilon_{28}\\
\varepsilon_{31}&\varepsilon_{32}& \varepsilon_{33}&\varepsilon_{34}&\varepsilon_{35}&\varepsilon_{36}&\varepsilon_{37}&\varepsilon_{38}\\
\hline
\varepsilon_{41}&\varepsilon_{42}&\varepsilon_{43}&\varepsilon_{44}&\varepsilon_{45}&\varepsilon_{46} &\varepsilon_{47}&\varepsilon_{48}\\
\varepsilon_{51}& \varepsilon_{52} &\varepsilon_{53}&\varepsilon_{54}&\varepsilon_{55}&\varepsilon_{56}&\varepsilon_{57}&\varepsilon_{58}\\
\hline
\varepsilon_{61}&\varepsilon_{62}& \varepsilon_{63}&\varepsilon_{64}&\varepsilon_{65}&\varepsilon_{66}&\varepsilon_{67}&\varepsilon_{68}\\
\hline
\varepsilon_{71}&\varepsilon_{72}& \varepsilon_{73}&\varepsilon_{74}&\varepsilon_{75}&\varepsilon_{76}&\varepsilon_{77}&\varepsilon_{78}\\
\varepsilon_{81}&\varepsilon_{82}& \varepsilon_{83}&\varepsilon_{84}&\varepsilon_{85}&\varepsilon_{86}&\varepsilon_{87}&\varepsilon_{88}\\
\end{array}
\right)
\end{aligned}
\end{equation*}
can be reduced, by transformation $J_{\text{can}}+E\mapsto {S}(E)^{-1} (J_{\text{can}}+E) {S}(E),$ to the form   
%$$J_{\text{can}}+E\mapsto {S}(E)^{-1} (J_{\text{can}}+E) {S}(E) =  J_{\text{can}} +D,$$where
\begin{equation*}
\begin{aligned}
&J_{\text{can}} +D(E) = J_3(\lambda)\oplus J_2(\lambda)\oplus J_1(\lambda)\oplus J_2(\mu) +D(E)\\ 
&=
%\footnotesize{ 
\left( \begin{array}{c|c|c|c}
\begin{matrix}
\lambda&1&0 \\
0& \lambda&1 \\
0&0& \lambda
\end{matrix}& & &\\
\hline
& \begin{matrix}
\lambda&1 \\
0& \lambda
\end{matrix}& &\\
\hline
& & \begin{matrix}
\lambda
\end{matrix} &\\
\hline
& & & \begin{matrix}
\mu&1 \\
0& \mu
\end{matrix} \end{array}
\right)+ 
\left( \begin{array}{c|c|c|c}
\begin{matrix}
0&0&0 \\
0&0&0 \\
\delta_1 &\delta_2& \delta_3
\end{matrix}& 
\begin{matrix}
0&0\\
0&0 \\ 
\delta_4& \delta_5
\end{matrix}& 
\begin{matrix}
0\\
0\\ 
\delta_6
\end{matrix} 
&\\
\hline
\begin{matrix}
\delta_{7} &0_{\phantom{1}}&0\\
\delta_{8} &0_{\phantom{1}}&0\\ 
\end{matrix}
& 
\begin{matrix}
0&0 \\
\delta_{10} & \delta_{11}
\end{matrix}
&
\begin{matrix}
0\\
\delta_{12} 
\end{matrix}
&\\
\hline
\begin{matrix}
\delta_9 &0_{\phantom{1}}&0\\
\end{matrix}
&
\begin{matrix}
\delta_{13\phantom{1}}&0\\
\end{matrix}
&
\begin{matrix}
\delta_{14}
\end{matrix}
&\\
\hline
\begin{matrix}
&&&&\\
&&&&\\ 
\end{matrix}
& 
\begin{matrix}
& \\
& 
\end{matrix}
&
\begin{matrix}
\\
\end{matrix}
&
\begin{matrix}
0&0 \\
\delta_{15}& \delta_{16}
\end{matrix}\\
\end{array}
\right)
\end{aligned}
\end{equation*}
and $\delta_i=\varphi_i(\vec\varepsilon)$. 
\end{example}

In Example \ref{22md} we consider the case of a perturbed $2 \times 2$ Jordan block. We present explicitly the functions $\delta_i=\varphi_i(\vec\varepsilon)$ in the miniversal deformation of the block and the transformation $S(E)$.  

\begin{example}[Reduction to a miniversal deformation for $2 \times 2$ Jordan block] \label{22md}
%Let $J_2(0)  + E$ be a perturbed $2 \times 2$ Jordan block. Here we 
We present a reduction of a perturbed $2 \times 2$ Jordan block $J_2(0)  + E$ to its miniversal deformation $J_2(0) + D(E)$. Define: 
$$
J_2(0)  + E = 
\left[ 
\begin{matrix}
0&1\\ 
0&0\\ 
\end{matrix}
\right] + 
\left[ 
\begin{matrix}
\varepsilon_{11}&\varepsilon_{12}\\ 
\varepsilon_{21}&\varepsilon_{22}\\ 
\end{matrix}
\right] \quad \text{and} \quad S(E) = \left[ 
\begin{matrix}
1&0\\
\frac{-\varepsilon_{11}}{1+\varepsilon_{12}}&\frac{1}{1+\varepsilon_{12}}\\ 
\end{matrix}
\right].
$$
Then  
\begin{align*}
J_2(0)  + D(E) = &S(E)^{-1}(J_2(0)  + E)S(E) =\left[ 
\begin{matrix}
1&0\\
\varepsilon_{11}&1+\varepsilon_{12}\\ 
\end{matrix}
\right]
\left[ 
\begin{matrix}
\varepsilon_{11}&1+\varepsilon_{12}\\ 
\varepsilon_{21}&\varepsilon_{22}\\ 
\end{matrix}
\right]
\left[ 
\begin{matrix}
1&0\\
\frac{-\varepsilon_{11}}{1+\varepsilon_{12}}&\frac{1}{1+\varepsilon_{12}}\\ 
\end{matrix}
\right] \\ 
& =\left[ 
\begin{matrix}
0&1\\ 
\varepsilon_{21}(1+\varepsilon_{12}) - \varepsilon_{11} \varepsilon_{22} &\varepsilon_{11} + \varepsilon_{22}\\ 
\end{matrix}
\right]=
\left[ 
\begin{matrix}
0&1\\ 
-{\rm det}(J_2(0)  + E)& {\rm trace}(J_2(0)  + E) \\ 
\end{matrix}
\right].
\end{align*}
In terms of Definition \ref{vdef} we have 
\begin{align*}
{\cal A}(\delta_1,\delta_2)  = {\cal A}(\varphi_1(\vec\varepsilon),&\varphi_2(\vec\varepsilon)) =  \left[ 
\begin{matrix}
0&1\\ 
\varphi_1(\vec\varepsilon)&\varphi_2(\vec\varepsilon)\\ 
\end{matrix}
\right] = \left[ 
\begin{matrix}
0&1\\ 
\varepsilon_{21}(1+\varepsilon_{12}) - \varepsilon_{11} \varepsilon_{22} &\varepsilon_{11} + \varepsilon_{22}\\ 
\end{matrix}
\right],
\end{align*}
i.e., $\delta_1=\varphi_1(\vec\varepsilon) = \varepsilon_{21}(1+\varepsilon_{12}) - \varepsilon_{11} \varepsilon_{22}$  and $\delta_2=\varphi_2(\vec\varepsilon) = \varepsilon_{11} + \varepsilon_{22}$.
\end{example}

It is possible to directly generalize Example \ref{22md} to a single Jordan block of any size, since we can easily see what elementary operations with rows and columns of the matrix are needed to eliminate the unwanted entries. Such a reduction can also be done in Matlab using symbolic computations, see Example \ref{33md}. For the general case (many Jordan blocks), we refer to 
%the ``Sketch of the proof of Theorem 2.1'' in 
\cite[pp.~5--7]{KlSe14}.

\begin{example}[Reduction to a miniversal deformations for $3 \times 3$ Jordan block]  \label{33md}
In this example we present Matlab code that performs a reduction of a perturbed $3 \times 3$ Jordan block to its miniversal deformation. 
%\begin{verbatim}
\begin{figure}
\begin{lstlisting}[language=Matlab]
% Creating a fully perturbed 3x3 Jordan block 

syms E [3 3];
J3 = [0 1 0; 0 0 1; 0 0 0];

AE = J3 + E

% Setting the first row to [0 1 0]

S1 = [1 0 0; 
     -AE(1,1)/AE(1,2) 1/AE(1,2) -AE(1,3)/AE(1,2);
      0 0 1];

A1=inv(S1)*(AE)*S1;

% Setting the second row to [0 0 1]

S2 = [1 0 0;
      0 1 0;
    -A1(2,1)/A1(2,3) -A1(2,2)/A1(2,3) 1/A1(2,3)];

A2 = inv(S2)*(A1)*S2;

% Simplyfying the expressions in the third row 

AD = simplify(A2) 
\end{lstlisting}
\caption{Matlab code for reduction to miniversal deformation of perturbed $J_3(0)$.}
\label{codej3}
\end{figure}
%\end{verbatim}
Running the code in Figure \ref{codej3} results in the output presented in Figure \ref{rescodej3}. 
\begin{figure}
\begin{lstlisting}[language=Matlab]
AE =
[e1_1, e1_2 + 1,     e1_3]
[e2_1,     e2_2, e2_3 + 1]
[e3_1,     e3_2,     e3_3]

AD =
[ 0, 1, 0]
[ 0, 0, 1]
[e3_1 - e1_1*e3_2 + e1_2*e3_1 - e2_1*e3_3 + e2_3*e3_1 + e1_1*e2_2*e3_3 - e1_1*e2_3*e3_2 - e1_2*e2_1*e3_3 + e1_2*e2_3*e3_1 + e1_3*e2_1*e3_2 - e1_3*e2_2*e3_1, e2_1 + e3_2 - e1_1*e2_2 + e1_2*e2_1 - e1_1*e3_3 + e1_3*e3_1 - e2_2*e3_3 + e2_3*e3_2, e1_1 + e2_2 + e3_3]
\end{lstlisting}
\caption{Perturbed $J_3(0)$ (denoted AE) and its miniversal deformation (denoted AD). The expressions in the last row of AD are the coefficients, with the opposite signs, of the characteristic polynomial of AE.}
\label{rescodej3}
\end{figure}

\end{example}

Beside the miniversal deformations, examples \ref{22md} and \ref{33md}, as well as the reduction process in \cite{KlSe14}, give us an idea of how to construct the transformation matrix, see also algorithms in \cite{DmFS12,DmFS14,Mail99, Mail01} and Section~\ref{secred}. 

%\begin{multline*}
%(I-X)^{-1}(A + {\cal E})(I-X) = (I+X+X^2+X^3+ \dots) (A + {\cal E}) (I-X) = A + \widetilde{D} +\\
%\underbrace{{\cal E} - \widetilde{D} + (A + \widetilde{D})X - X(A + \widetilde{D})}_{\text{we set it to zero}} \qquad - 
%\underbrace{O(X^2)}_{\text{full perturbation of very small norm}} \,
%\end{multline*}
%\begin{multline*}
%(I-X)^{-1}A (I-X) = (I+X+X^2+X^3+ \dots) A (I-X) = \\
%A \quad+ \quad\underbrace{XA - AX}_{\text{small}} \quad + \quad\underbrace{X(I-X)^{-1}(XA-AX)}_{\text{very small}} \,
%\end{multline*}
 
As one may notice, looking at the definition of  versality (see also the characterization  in Lemma \ref{complemma}, presented in Section \ref{seccodim}), versal or even miniversal deformation of a matrix is not unique. For example, the perturbed matrix $J_{\text{can}}+E$ from Example \ref{mindefex} can also be reduced by similarity transformation to $J_{\text{can}} +\widetilde{D}(E)$,
%$J_{\text{can}}+E\mapsto {\cal S}(E)^{-1} (J_{\text{can}}+E) {\cal S}(E) =  J_{\text{can}} +D,$
where
\begin{equation*}
\begin{aligned}
&J_3(\lambda)\oplus J_2(\lambda)\oplus J_1(\lambda)\oplus J_2(\mu) +\widetilde{D}(E)\\ 
&=
%\footnotesize{ 
\left( \begin{array}{c|c|c|c}
\begin{matrix}
\lambda&1&0 \\
0& \lambda&1 \\
0&0& \lambda
\end{matrix}& & &\\
\hline
& \begin{matrix}
\lambda&1 \\
0& \lambda
\end{matrix}& &\\
\hline
& & \begin{matrix}
\lambda
\end{matrix} &\\
\hline
& & & \begin{matrix}
\mu&1 \\
0& \mu
\end{matrix} \end{array}
\right)+ 
\left( \begin{array}{c|c|c|c}
\begin{matrix}
 \delta_3&0&0 \\
\delta_2& \delta_3&0 \\
\delta_1 &\delta_2& \delta_3
\end{matrix}& 
\begin{matrix}
0&0\\
\delta_5&0 \\ 
\delta_4& \delta_5
\end{matrix}& 
\begin{matrix}
0\\
0\\ 
\delta_6
\end{matrix} 
&\\
\hline
\begin{matrix}
\delta_{7} &0_{\phantom{1}}&0\\
\delta_{8} &\delta_{7}&0\\ 
\end{matrix}
& 
\begin{matrix}
\delta_{11}&0 \\
\delta_{10} & \delta_{11}
\end{matrix}
&
\begin{matrix}
0\\
\delta_{12} 
\end{matrix}
&\\
\hline
\begin{matrix}
\delta_9 &0_{\phantom{1}}&0\\
\end{matrix}
&
\begin{matrix}
\delta_{13\phantom{1}}&0\\
\end{matrix}
&
\begin{matrix}
\delta_{14}
\end{matrix}
&\\
\hline
\begin{matrix}
&&&&\\
&&&&\\ 
\end{matrix}
& 
\begin{matrix}
& \\
& 
\end{matrix}
&
\begin{matrix}
\\
\end{matrix}
&
\begin{matrix}
\delta_{16}&0 \\
\delta_{15}& \delta_{16}
\end{matrix}\\
\end{array}
\right)
\end{aligned}
\end{equation*}
and $\delta_i=\varphi_i(\vec\varepsilon)$. Note, that the number of different functions $\delta_i$ in $\widetilde{D}(E)$ is the same as in $D(E)$ ($D(E)$ is given in Example \ref{mindefex}) but $\widetilde{D}(E)$ has fewer non-zero entries, see more on the miniversal deformations of the shape $\widetilde{D}(E)$ in \cite{Arno71,EdEK97}. 

This paper focuses on versal deformations of JCF of complex matrices. Nevertheless, we also name other known results concerning miniversal deformations and their applications for matrices and matrix pencils in the remarks throughout the paper. 

\begin{remark}[Known deformations for matrices and matrix pencils]
%The notion of miniversal deformations has been extended to general matrix pencils under strict equivalence, $A- \lambda B \mapsto R(A- \lambda B)S, \text{  det }S \neq 0, \text{  det }R \neq 0$, see e.g., \cite{EdEK97, GaSe99}, structured matrix pencils under congruence, $A- \lambda B \mapsto S^T(A- \lambda B)S, \text{  det }S \neq 0, A^T = \pm A, B^T= \pm B$, see e.g., \cite{Dmyt19,Dmyt16,DmFS12,DmFS14}, matrices of bilinear forms, $A \mapsto S^TAS, \text{  det }S \neq 0 $, see e.g., \cite{DmFS12}, matrices of sesquilinear forms, $A \mapsto S^*AS, \text{  det }S \neq 0 $, see e.g., \cite{DmFS14} forms, as well as to matrices under similarity over various fields~\cite{BoSS17,Gali72,GaSe99}. 
The notion of miniversal deformations has been extended to 
\begin{itemize}
\item matrices under similarity over various fields, see~\cite{BoSS17,Gali72,GaSe99};
\item matrices of bilinear forms ($A \mapsto S^TAS, \text{ det }S \neq 0$), see \cite{DmFS12}; 
\item matrices of sesquilinear forms ($A \mapsto S^HAS, \text{  det }S \neq 0 $), see \cite{DmFS14}; 
\item general matrix pencils under strict equivalence ($A- \lambda B \mapsto RAS- \lambda RBS, \text{  det }S \neq 0, \text{  det }R \neq 0$), see \cite{EdEK97, GaSe99,KlSe10}; 
\item general matrix pencils under contragredient equivalence ($A- \lambda B \mapsto R^{-1}AS- \lambda S^{-1}BR, \text{  det }S \neq 0, \text{  det }R \neq 0$), see \cite{GaSe99}; 
\item structured matrix pencils under congruence ($A- \lambda B \mapsto S^T(A- \lambda B)S, \text{  det }S \neq 0, A^T = \pm A, B^T= \pm B$), see \cite{Dmyt16,Dmyt19,DmFS12,DmFS14}.
\end{itemize}
We also refer the reader to the introductions of \cite{DmFS12,DmFS14} for more information. 
%on the know deformations for various transformations. 
\end{remark}
In the rest of the paper we discuss the use of miniverasal deformations for solving various problems, namely: codimension computations, characterization of closure relations for orbits and bundles, and reduction of the unstructured perturbations to structured perturbations. 

%\cite{Klym15}: In 1971, Arnold constructed miniversal deformations of square complex matrices under the similarity transformation. Similar miniversal deformations were constructed for matrices under congruence and under *congruence. For matrices under similarity and under congruence, the holomorphic transformations to their miniversal deformations always exist. We prove that this is not true for matrices under *congruence.  

\section{Codimension computation via miniversal deformations} \label{seccodim}
The set of matrices similar to an $n \times n$ matrix $A$ forms a manifold in the complex $n^2$ dimensional space. This manifold is the orbit of $A$ under the action of similarity:
\begin{align*}
{\cal O}(A)= \{ C^{-1}AC: C \in GL_n(\mathbb{C}) \}.
\end{align*}
The vector space
\begin{align*}\label{tansp}
T(A):=\{XA-AX:
X\in{\mathbb
C}^{n\times n}\} 
\end{align*}
is the tangent space
to the similarity orbit of
$A$ at the point
$A$ since 
\begin{multline*}
(I-\varepsilon X)^{-1}A (I-\varepsilon X) = (I+\varepsilon X+\varepsilon^2 X^2+\varepsilon^3 X^3+ \dots) A (I-\varepsilon X)\\
 =  \ A \ + \ 
\underbrace{\varepsilon(XA - AX)}_{\text{order 1 in $\varepsilon$}} \  + \ 
\underbrace{\varepsilon^2 X(I-\varepsilon X)^{-1}(XA-AX)}_{\text{order 2 in $\varepsilon$}} \,
\end{multline*}
for all $n$-by-$n$ matrices $X$ and each $\varepsilon\in\mathbb C$.
Lemma \ref{complemma} shows that the tangent space plays  an important role in the characterization of versal deformations. 
\begin{lemma}[Section 2.3 in \cite{Arno71}]
\label{complemma}
%  Let ${\cal L}_{P(\lambda)}, {\mathcal D}({\mathcal E}) \in {\mathbb C}^{\,dn \times dn}_{\rm skew}\times {\mathbb C}^{\,dn \times dn}_{\rm skew}$.  The deformation 
Let $A$ and $E$ be $n \times n$ matrices. Then
  $$A+{D}(E) \text{ is versal if and only if } \ {\mathbb C}^{\,n \times n} = T(A) + {D}({\mathbb C}),$$ where $T(A)$ is the tangent space to $ {\cal O}(A)$, at the point $A$ and  ${D}({\mathbb C})$ is a space of matrices of the form ${D}(E)$, where the functions $\varphi_i(\vec{\varepsilon})$ are replaced by complex numbers.
\end{lemma}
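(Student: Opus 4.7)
The plan is to reformulate versality as a submersion statement for a natural holomorphic map on the product of ``conjugator'' parameters and ``deformation'' parameters, and then read both directions off the surjectivity of its differential.

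First I would introduce the holomorphic map
\[
\Phi\colon U\times V\longrightarrow \mathbb{C}^{n\times n},\qquad
\Phi(X,\vec\delta)=(I+X)^{-1}\bigl(A+D(\vec\delta)\bigr)(I+X),
\]
where $U$ is a neighborhood of $0\in\mathbb{C}^{n\times n}$ on which $I+X$ is invertible and $V$ is a neighborhood of $0\in\mathbb{C}^k$. Expanding $(I+X)^{-1}=I-X+X^{2}-\cdots$ and collecting first-order terms gives
\[
d\Phi_{(0,0)}(X,\vec\delta)=AX-XA+D(\vec\delta).
\]
Since $T(A)=\{XA-AX:X\in\mathbb{C}^{n\times n}\}$ is a linear subspace (hence closed under sign change), the image of $d\Phi_{(0,0)}$ is exactly $T(A)+D(\mathbb{C})$. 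Thus $\mathbb{C}^{n\times n}=T(A)+D(\mathbb{C})$ is precisely the statement that $d\Phi_{(0,0)}$ is surjective.

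For the ``$\Leftarrow$'' direction, surjectivity of $d\Phi_{(0,0)}$ makes $\Phi$ a holomorphic submersion at the origin. Choose a linear complement $W\subset \mathbb{C}^{n\times n}\times\mathbb{C}^k$ of $\ker d\Phi_{(0,0)}$; then $\Phi|_{W}$ is a local biholomorphism onto a neighborhood of $A$ in $\mathbb{C}^{n\times n}$ by the holomorphic inverse function theorem. Given any deformation $\mathcal{B}(\vec\varepsilon)$ of $A$, composing $\mathcal{B}$ with $(\Phi|_W)^{-1}$ yields holomorphic functions $X(\vec\varepsilon)$ and $\vec\varphi(\vec\varepsilon)$, vanishing at $\vec 0$, such that
\[
\mathcal{B}(\vec\varepsilon)=(I+X(\vec\varepsilon))^{-1}\bigl(A+D(\vec\varphi(\vec\varepsilon))\bigr)(I+X(\vec\varepsilon)).
\]
Setting $\mathcal{I}(\vec\varepsilon):=I+X(\vec\varepsilon)$ realizes $\mathcal{B}$ as equivalent, in the sense of Definition~\ref{eqdef}, to $\mathcal{A}(\vec\varphi(\vec\varepsilon))$, which is exactly versality.

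For the ``$\Rightarrow$'' direction I would test versality against the tautological deformation $\mathcal{B}(\vec\varepsilon)=A+\sum_{i,j}\varepsilon_{ij}E_{ij}$, where $\{E_{ij}\}$ is the standard basis of $\mathbb{C}^{n\times n}$ and the parameters are the matrix entries themselves. Versality supplies a deformation $\mathcal{I}(\vec\varepsilon)=I+\sum_{i,j}\varepsilon_{ij}X_{ij}+O(|\vec\varepsilon|^{2})$ of $I$ and a holomorphic $\vec\varphi$ with $\vec\varphi(\vec 0)=\vec 0$ so that $\mathcal{B}(\vec\varepsilon)=\mathcal{I}(\vec\varepsilon)^{-1}(A+D(\vec\varphi(\vec\varepsilon)))\mathcal{I}(\vec\varepsilon)$. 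Expanding both sides and matching the coefficient of $\varepsilon_{ij}$ gives
\[
E_{ij}=AX_{ij}-X_{ij}A+D\bigl(\partial_{\varepsilon_{ij}}\vec\varphi(\vec 0)\bigr)\in T(A)+D(\mathbb{C}),
\]
so every basis matrix, and hence all of $\mathbb{C}^{n\times n}$, lies in $T(A)+D(\mathbb{C})$.

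The main obstacle is the ``$\Leftarrow$'' direction, specifically ensuring that the parameter change $\vec\varphi$ obtained from the inverse function theorem is a genuine convergent power series vanishing at the origin and that $\mathcal{I}(\vec\varepsilon)=I+X(\vec\varepsilon)$ is a legitimate deformation of $I$; both are handled by choosing the complement $W$ to respect the product structure so that the local inverse of $\Phi|_{W}$ is holomorphic in a joint neighborhood of $A$ and can then be composed with the holomorphic map $\mathcal{B}$. The ``$\Rightarrow$'' direction is essentially a bookkeeping computation once the tautological deformation is introduced.
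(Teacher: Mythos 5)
Your proof is correct, and it is essentially Arnold's original argument (the paper itself gives no proof of this lemma, citing Section~2.3 of \cite{Arno71} instead): versality is recast as surjectivity of the differential of $\Phi(X,\vec\delta)=(I+X)^{-1}(A+D(\vec\delta))(I+X)$ at the origin, with the inverse function theorem on a complement of the kernel giving sufficiency and the tautological deformation $A+\sum\varepsilon_{ij}E_{ij}$ giving necessity. One small remark: no special ``product-structure-respecting'' choice of the complement $W$ is needed, since the coordinate projections from $W\subset\mathbb{C}^{n\times n}\times\mathbb{C}^k$ are linear and hence automatically holomorphic.
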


The {\it dimension of the orbit} of $A$ is the dimension of its tangent space at the point $A$.
%; itis well-defined because the dimensions of the tangent spaces at every point of the orbit are equal (e.g., see \cite{top}).
The {\it codimension of the orbit} $A$ is the dimension of the normal space of its orbit at the point
$A$  which is equal to $n^2$ minus the dimension of the orbit. Lemma \ref{complemma} implies that the codimension of the orbit of $A$ is equal to the minimal possible dimension of the space ${\cal D}(\mathbb C)$ and the latter is also equal to the minimal number of independent parameters in the matrices from ${\cal D}(\mathbb C)$. Therefore miniversal deformations automatically provide us the codimensions of orbits. Summing up, 
\begin{equation*}\label{syst}
\begin{split}
\codim ( &\mathcal{O} (A)) 
%= \dim \left( \text{the solution space of } \ \begin{split} XA-AX=0 \end{split} \right)\\ 
%\phantom{( \mathcal{O} (A))} 
= \text{ \# \{functions $\varphi_i(\vec{\varepsilon})$ in the miniversal deformation of $A$\},}
\end{split}
\end{equation*}
where $ \# \Omega$ is the number of elements in the set $\Omega$. 
Note that the codimension of the orbit of $A$ is also equal to the number of linearly independent solutions of the matrix equation
%\begin{equation*} \label{congrorbequet}
$XA-AX=0$, 
%\end{equation*}
for more details, see e.g., \cite{DeEd95}.

A bundle $\mathcal{B} (A)$ is a union of matrix orbits with the same Jordan structures except that the distinct eigenvalues may be different. Bundles appear naturally in various applications and have been studied extensively, see, e.g., \cite{DeDo23,DFKK15,EdEK99}. Since the eigenvalues in bundles may vary, they become additional parameters resulting in the following codimension formula: 
\begin{equation}\label{buncod}
\begin{split}
\codim ( \mathcal{B} &(A))  = \codim ( \mathcal{O} (A)) - \#\text{ \{different  eigenvalues of $A$\} }\\
&= \text{\# \{functions $\varphi_i(\vec{\varepsilon})$ in the miniversal deformation of $A$\}} \\
&- \text{\# \{different  eigenvalues of $A$\}}.
\end{split}
\end{equation}
From \eqref{buncod} we can conclude that the blocks corresponding to eigenvalues of algebraic multiplicity one, contribute nothing to the codimension of the corresponding bundle. Example \ref{cobumd} illustrates this and shows how easy it can be to compute the codimensions via miniversal deformations.  
\begin{example} \label{cobumd}
Let $A_q$ be a matrix in JCF that has one $J_3(\lambda_1)$ block and $q-1$ $J_1(\lambda_k), k = 2,\dots,q$ blocks. Miniversal defrmation of $A_q$ is 
$$
A_q +D_q= \left[\begin{array}{c}
\begin{matrix}
\lambda_1 &1&0 \\
0& \lambda_1&1 \\
\delta_1&\delta_2& \lambda_1+\delta_3
\end{matrix}
\end{array}
\right]
\oplus  [\lambda_2+\delta_4] \oplus \cdots \oplus [\lambda_q+\delta_{q+2}].
$$
\noindent By counting the number of $\delta_i$ in $A_q+D_q$ (it is equal to ${q+2}$) and subtracting from it the number of  different eigenvalues that $A_q+D_q$ has (it is equal to $q$), we obtain that $\codim ( \mathcal{B} (A_q))= 2$ for any value of $q$. 
%$$
%\codim ( \mathcal{B} (A_1))= \codim ( \mathcal{B} (A_2))= \dots = \codim ( \mathcal{B} (A_n))= 2.   
%$$
Note that the dimensions of bundles $\mathcal{B} (A_q)$ are different for different $q$. 
\end{example}
It is possible to compute the codimension of the similarity orbit of a matrix using the Matrix Canonical Structure Toolbox for Matlab (MCS Toolbox) \cite{DmJK13}. The toolbox was created to simplify working with canonical forms and it can compute  codimensions for various cases, see Remark \ref{remMCS} for more details.  

\begin{remark}[MCS Toolbox] %Codimension computation of various orbits and bundles in 
\label{remMCS}
MCS Toolbox's functionality includes computation of the codimensions of orbits and bundles for 
\begin{itemize}
\item matrices under congruence, and *congruence, for the theoretical results, see, e.g., \cite{DeDo11b,DeDo11a,DmFS12,DmFS14};    
\item matrix pencils under strict equivalence \cite{DeEd95};
\item controllability and observability matrix pairs \cite{ElJK09};
\item symmetric matrix pencils under congruence \cite{Dmyt19,DmKS14};  
\item skew-symmetric matrix pencils under congruence \cite{Dmyt17,DmKS13}. 
\end{itemize}
\end{remark}

\begin{tcolorbox}[colframe=blue!15!white,colback=blue!5!white]
{\bf Summary of codimension computations.} 
%The number of functions $\varphi_i(\vec{\varepsilon})$ in a miniversal deformation is equal to the codimension of the corresponding orbit (e.g., similarity orbit for Jordan canonical form, strict equivalence orbit for Kronekher canonical form, etc.). Therefore computing a miniversal deformation automatically provides us with the corresponding codimension.
The number of functions $\varphi_i(\vec{\varepsilon})$ in a miniversal deformation of a matrix is equal to the codimension of the similarity orbit of this matrix. 
%for Jordan canonical form, strict equivalence orbit for Kronekher canonical form, etc.). 
Therefore computing a miniversal deformation automatically provides us with the codimension.
\end{tcolorbox}

\section{Closure relation of orbits and bundles} \label{secclo}

The problem of changes of JCF under arbitrarily small perturbations is equivalent to the problem of describing what similarity orbits are in the closure of a given similarity orbit. More precisely,  $\mathcal{O} (A) \subset \overline{\mathcal{O} (B)}$ is equivalent to the fact that for every positive $\varepsilon > 0$ there is a perturbation $E, \ \| E \|_F < \varepsilon,$ and a  nonsingular matrix $S$ such that $S^{-1}(A+E)S=B$. Recall that $\| \cdot \|_F$ denotes the Frobenius norm of a matrix. 
If we allow the values of the distinct eigenvalues of $A+E$ to vary, i.e., $S^{-1}(A+E)S=B_E$ and $B_E \in \mathcal{B} (B)$, then the problem of changes of JCF (up to the values of the distinct eigenvalues) of $A+E$ is equivalent to deciding whether the bundle $\mathcal{B} (A)$ is in $\overline{\mathcal{B} (B)}$. Notice that we perturb only a single matrix $A$ but make a conclusion about the whole $\mathcal{O} (A)$ or $\mathcal{B} (A)$ belonging to $\overline{\mathcal{O} (B)}$ or $\overline{\mathcal{B} (B)}$, respectively. For the orbits this conclusion is indeed straightforward but for the bundles it requires some explanations. In \cite{DeDo23} Dopico and De Ter\'{a}n considered such a question about closure relations for bundles of matrix pencils. Moreover, their arguments may  be applied  to  the case of closure relations for bundles of matrices, see \cite[Section 4.1]{DeDo23}. In Theorem \ref{defbun} we provide a new proof of the latter case using miniversal deformations. 

\begin{theorem} \label{defbun}
Let $A \in \bun(B)$ and $B \in \overline{\bun(C)}$ then $A \in \overline{\bun(C)}$.
\end{theorem}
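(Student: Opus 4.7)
The plan is to apply Theorem \ref{mdarn} to $B$ to produce explicit approximants from $\bun(C)$ that converge to $B$ in miniversal form, and then to show that the \emph{same} parameter values also place $A$ in $\bun(C)$ up to similarity, by exploiting that the shape of the deformation \eqref{defJ} depends only on the Jordan block partition of the underlying matrix and not on its distinct eigenvalues.

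Because both $\bun(C)$ and $\overline{\bun(C)}$ are similarity-invariant, I would first reduce to the case that $A$ and $B$ are both in Jordan canonical form. The hypothesis $A \in \bun(B)$ then forces $A$ and $B$ to share the same Jordan block structure, differing only in the distinct eigenvalues $\lambda_1^A,\dots,\lambda_p^A$ versus $\lambda_1^B,\dots,\lambda_p^B$. Consequently, the positions of the free entries $\delta_i$ in the miniversal deformation \eqref{defJ} coincide for $A$ and $B$, and \eqref{defJ} is block-diagonal across the $p$ distinct eigenvalues, so
\begin{equation*}
A + D = \bigoplus_{i=1}^{p}\bigl(J(\lambda_i^A) + D_i\bigr), \qquad
B + D = \bigoplus_{i=1}^{p}\bigl(J(\lambda_i^B) + D_i\bigr),
\end{equation*}
with matching deformation blocks $D_i$.

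Next, I would pick a sequence $C_k \in \bun(C)$ with $C_k \to B$ and set $E_k := C_k - B$. For $k$ large, Theorem \ref{mdarn} yields similarities $S(E_k)^{-1} C_k S(E_k) = B + D(E_k)$ in miniversal form, with parameters $\delta_i^{(k)} = \varphi_i(E_k) \to 0$. Similarity invariance of bundles gives $B + D(E_k) \in \bun(C)$. I would then form $A + D(E_k)$ by inserting the same values $\delta_i^{(k)}$ into the identically-shaped miniversal deformation of $A$. The identity $J(\lambda_i^A) + D_i(E_k) = \lambda_i^A I + \bigl(J(0) + D_i(E_k)\bigr)$ shows that each summand has a Jordan block structure depending only on $D_i(E_k)$ and not on $\lambda_i^A$; the same holds for $B$. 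Hence $A + D(E_k)$ and $B + D(E_k)$ have matching Jordan block structures, up to shifts of the distinct eigenvalues within each summand by $\lambda_i^A - \lambda_i^B$, and so they lie in the same bundle. This yields $A + D(E_k) \in \bun(C)$; letting $k \to \infty$ gives $A + D(E_k) \to A$, hence $A \in \overline{\bun(C)}$.

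The hard part will be justifying the equality of bundles of $A + D(E_k)$ and $B + D(E_k)$. This rests on two structural features of \eqref{defJ}: the block-diagonality across distinct eigenvalue groups, and the fact that shifting a single Jordan group's eigenvalue amounts to adding a scalar multiple of the identity, which commutes with any perturbation confined to that group. A small technical caveat is to rule out accidental eigenvalue coincidences between different summands of $A + D(E_k)$; this is automatic for $k$ large, because the eigenvalue clusters around the fixed distinct points $\lambda_i^A$ shrink to size $O(\|D(E_k)\|)$.
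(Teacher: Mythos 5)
Your proposal is correct and follows essentially the same route as the paper's proof: reduce $A$ and $B$ to matching Jordan forms, transfer a miniversal-shaped perturbation $D(E)$ that places $B+D(E)$ in $\bun(C)$ over to $A$, and observe that each eigenvalue group of $A+D(E)$ is a scalar shift $\mu_i I + (J(0)+D_i)$ of the corresponding group of $B+D(E)$, so the two perturbed matrices share a Jordan structure up to eigenvalue values (with the same caveat about avoiding accidental eigenvalue coincidences between summands).
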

\begin{proof}
Without a loss of generality we may assume that $B$ is in its JCF: 
\begin{equation}
\label{Bjcf}
B = J(\lambda_1) \oplus J(\lambda_2) \oplus \dots \oplus J(\lambda_n), \ \text{ where } \ J(\lambda_i) =  \bigoplus_{j} J_{k_{i,j}}(\lambda_i), \quad \lambda_i \neq \lambda_j. 
\end{equation}
For $A$ there is a non-singular matrix $P$ such that 
\begin{equation}
\label{Ajcf}
P^{-1}AP = J(\mu_1) \oplus J(\mu_2) \oplus \dots \oplus J(\mu_n), \ \text{ where } \ J(\mu_i) =  \bigoplus_{j} J_{k_{i,j}}(\mu_i), \quad \mu_i \neq \mu_j. 
\end{equation}
We emphasize, that the values $n$ and $k_{i,j}$ for all $i,j$ in \eqref{Bjcf} and \eqref{Ajcf} are the same, since $A \in \bun(B)$ (i.e., $A$  and $B$ have the same JCF up to the values of eigenvalues). 

Since $B \in \overline{\bun(C)}$  then there is a perturbation $D$ of $B$, such that $B+D \in \bun(C)$ and $D$ is in the shape of miniversal deformation \eqref{defJ}, i.e., 
$$
B +D = (J(\lambda_1)+D_1) \oplus (J(\lambda_2)+D_2) \oplus \dots \oplus (J(\lambda_n)+D_n). 
$$
The JCF of $B+D$ is a direct sum of the JCF of direct summands $J(\lambda_i)+D_i$. Therefore let $S = \oplus_{i=1}^{n} S_i$ be a non-singular matrix that reduces $B+D$ to  its JCF, i.e., 
\begin{equation}
\label{BPjcf}
\begin{aligned}
S^{-1}(B+D)S &= S^{-1} \big( (J(\lambda_1)+D_1) \oplus (J(\lambda_2)+D_2) \oplus \dots \oplus (J(\lambda_n)+D_n) \big)S \\
%&= S^{-1}_1(J(\lambda_1)+D_1)S_1 \oplus S^{-1}_2(J(\lambda_2)+D_2)S_2 \oplus \dots \oplus S^{-1}_n(J(\lambda_n)+D_n) S_n \\
&= \bigoplus_{i=1}^n S^{-1}_i (J(\lambda_i)+D_i )S_i= \bigoplus_{i=1}^n \big(S^{-1}_i(J(0)+D_i)S_i + \lambda_i I \big). 
\end{aligned}
\end{equation}
Now consider the perturbation $D$ of $P^{-1}AP$ and apply the similarity transformation with the matrix $S$ to it: 
\begin{equation}
\label{APjcf}
\begin{aligned}
S^{-1}(P^{-1}AP +D)S 
%&= \big( (J(\mu_1)+D_1) \oplus (J(\mu_2)+D_2) \oplus \dots \oplus (J(\mu_n)+D_n) \big)S \\
%&= S^{-1}_1(J(\mu_1)+D_1)S_1 \oplus S^{-1}_2(J(\mu_2)+D_2)S_2 \oplus \dots \oplus S^{-1}_n(J(\mu_n)+D_n) \right)S_n \\
&= \bigoplus_{i=1}^n S^{-1}_i (J(\mu_i)+D_i )S_i= \bigoplus_{i=1}^n \big(S^{-1}_i(J(0)+D_i)S_i + \mu_i I \big). 
\end{aligned}
\end{equation}
Note that since $D$ can be chosen with arbitrarily small entries, we can assume that the direct summands $S^{-1}_i(J(0)+D_i)S_i + \lambda_i I$ and $S^{-1}_j(J(0)+D_j)S_j + \lambda_j I$ have no eigenvalues in common for $i \neq j$  as  well as that the direct summands $S^{-1}_i(J(0)+D_i)S_i + \mu_i I$ and $S^{-1}_j(J(0)+D_j)S_j + \mu_j I$ have no eigenvalues in common for $i \neq j$. Therefore, \eqref{BPjcf} and \eqref{APjcf} show the JCFs of $B+D$ and $P^{-1}AP +D$ only differ in the values of eigenvalues (the eigenvalues of each $S^{-1}_i(J(0)+D_i)S_i$ are shifted with $\lambda_i$ and $\mu_i$, respectively). Thus $P^{-1}AP +D \in \bun(C)$ and, since $D$ is arbitrarily small, we have $P^{-1}AP \in \overline{\bun(C)}$. Therefore  $A \in \overline{\bun(C)}$. 
\end{proof}

%Often JCF provide a complete understanding of the properties and behaviours of the underlying physical system and thus is the actual objects of interest. Nevertheless JCF is sensitive to perturbations of the matrix, e.g., the eigenvalues may coalesce or split apart, appear or disappear. In general, these problems are called ill-posed, meaning that small perturbations in the input parameters may lead to big changes in the answers. 

For the necessary and sufficient conditions for a matrix $A$ (or, by Theorem \ref{defbun}, a bundle $\mathcal{B} (A)$) being in the closure of another bundle, see, e.g., \cite{EdEK97,Kraf84,KrPr82}. The closure hierarchies of bundles (and also orbits) can be represented as graphs, so called stratification graphs \cite{Dmyt17,DmJK17,DJKV20,DmKa14,EdEK99}. Construction of such graphs is a way to study qualitatively how small perturbations can change the JCF of a matrix and find out what JCF matrices may have in an arbitrarily small neighbourhood of a given matrix. Miniversal deformations may simplify construction of such graphs since they allow us to take into account all possible perturbations of a matrix while working with the matrix, where only a few entries are perturbed, see Example~\ref{per32}.   
%Each node of such a graph represents matrices with a certain JCF (i.e., the similarity orbit) and there is an edge from one node to another if we can perturb a matrix associated with the first node such that its JCF becomes equal to a matrix associated with the second node. 
Note also that codimensions (discussed in Section \ref{seccodim}) play an important role in investigation of closure relation for orbits and bundles due to the fact that a given orbit (or bundle) has only orbits (or bundles) with higher codimensions in its closure. Thus codimension count provides us with a necessary but not sufficient condition for one orbit (or bundle) being in the closure of another orbit (or bundle). %Therefore such graphs are essentially the closure hierarchies of orbits or bundles.

%Miniversal deformations can also be used for analyzing changes in JCF of matrices under perturbations, e.g., for showing if the eigenvalues may coalesce or split apart.  
%
%Both the stratifications and miniversal deformations may be useful when the matrices arise as a result of measures and their entries are known with errors, see \cite{bobook,ref4} for some applications in control and stability theory. 
%
%An upper bound on the distance from such a miniversal deformation to $(A,B)$ is derived too. 
\begin{example} \label{per32}
To investigate what JCFs matrices in an arbitrarily small neighbourhood of a matrix $J_3(\lambda) \oplus  J_2(\lambda)$ may have, i.e., what JCFs $J_3(\lambda) \oplus  J_2(\lambda) + E$ (25 parameters, that are the entries of $E$) may have, it is enough to  investigate what JCFs $J_3(\lambda) \oplus  J_2(\lambda) + D(E)$ (9 parameters, see the matrix to the left in Figure~\ref{fig1}) may have. For example, matrices with the JCFs $J_5(\lambda), J_4(\lambda) \oplus  J_1(\lambda),$ and $J_1(\lambda_1) \oplus J_1(\lambda_2) \oplus J_1(\lambda_3) \oplus  J_2(\lambda)$ are in an arbitrarily small neighbourhood of a matrix $J_3(\lambda) \oplus  J_2(\lambda)$, see Figure \ref{fig1}. 
%the corresponding perturbations are presented in
\begin{figure}
\begin{equation*}
%{\small
\left[ \begin{array}{c|c}
\begin{matrix}
\lambda&1& \\
& \lambda&1 \\
\delta_1 & \delta_2 & \lambda + \delta_3
\end{matrix}& \begin{matrix}
&\\
& \\ 
\delta_4 && \delta_5
\end{matrix} \\
\hline
\begin{matrix}
\delta_6&&&&&&\\
\delta_7&&&&&&\\ 
\end{matrix}
& 
\begin{matrix}
\lambda&1 \\
\delta_{8} & \lambda + \delta_{9}
\end{matrix}
\end{array}
\right] \ \ \sim \ \
\begin{cases}
%{\small 
\left[ \begin{array}{c}
\begin{matrix}
\lambda&1&&& \\
& \lambda&1&& \\
&& \lambda&1&\\
&& &\lambda&1\\
&& &&\lambda\\
\end{matrix}
\end{array}
\right]  \quad &\text{if only } \delta_4 \neq 0,  \\
%{\small 
\left[ \begin{array}{c|c}
\begin{matrix}
\lambda&1&& \\
& \lambda&1& \\
&& \lambda&1\\
&& &\lambda\\
\end{matrix}& \\
\hline
& \lambda
\end{array}
\right]  \quad &\text{if only } \delta_5 \neq 0,\\
%{\small 
\left[ \begin{array}{c|c}
\begin{matrix}
\lambda_1&& \\
& \lambda_2& \\
&& \lambda_3
\end{matrix}& \\
\hline
& \begin{matrix}
\lambda&1 \\
& \lambda
\end{matrix}
\end{array}
\right]  \quad &\text{if only } \delta_1 \neq 0.\\
\end{cases}
\end{equation*}
\caption{We state what $\vec{\delta}=(\delta_1, \dots , \delta_9)$ can be chosen to show that $J_5(\lambda), J_4(\lambda) \oplus  J_1(\lambda),$ and $J_1(\lambda_1) \oplus J_1(\lambda_2) \oplus J_1(\lambda_3) \oplus  J_2(\lambda)$ are in an arbitrarily small neighbourhood of a matrix $J_3(\lambda) \oplus  J_2(\lambda)$. ($A \sim B$ means that $S^{-1}A(\vec{\delta})S= B$ for some $\vec{\delta}$ and nonsingular $S$.)}
\label{fig1}
\end{figure}
\end{example}
%\begin{center}
% \vskip-0.25cm
% \includegraphics[scale=0.235]{5x5bun.png}
%\end{center} 
In Remark \ref{remm} we provide a few examples of how various miniversal deformations were used for showing which canonical forms matrices and matrix pencils, have or can not have in an arbitrarily small neighbourhood of a given matrix or matrix pencil. 

%Miniversal deformations can help us to construct closure hierarchy graphs of various orbits and bundles for matrices (or matrix pencils) \cite{Dmyt17, DFKK15, DmJK17, DJKV20, DmKa14, EdEK99}. These graphs show 

\begin{remark}[Changes of canonical forms under perturbations] \label{remm}
Miniversal deformations can be used for studying changes of other (than JCF) canonical forms under arbitrarily small perturbations. Here we list a few examples of such usage: 
\begin{itemize}
\item In \cite[Example 2.1]{Dmyt16}, miniversal deformations of skew-symmetric matrix pencils are used to  show that in an arbitrarily small neighbourhood of a matrix pencil with the canonical form ${\cal L}_1\oplus {\cal L}_0$ there is always a matrix pencil with the canonical form ${\cal H}_2 (\lambda), \lambda \neq 0$, for the definitions of canonical blocks ${\cal L}_k$ and ${\cal H}_n (\lambda)$ and more details, see \cite{Dmyt16}. 
%see \cite{Dmyt16} for more details. 
\item In \cite[Theorem 2.3]{DFKK15} miniversal deformations of matrices of bilinear form are used to show that there is a small neighbourhood of a matrix with the canonical form 
{\tiny $\begin{bmatrix}
0&1&0 \\
-1& 0 &0 \\
0&0&0
\end{bmatrix}$}
 that does not contain a matrix with the canonical form 
{\tiny $\begin{bmatrix}
0&1&0 \\
\lambda &0 &0\\
0&0& 0
\end{bmatrix}$}.
\item In \cite{FKKS21} miniversal deformations of matrix pencils are used to calculate the Kronecker canonical form of pencils that are close to any given matrix pencil, i.e., the authors develop a qualitative perturbation theory of matrix pencils through miniversal deformations. 
\end{itemize}
\end{remark}

\begin{tcolorbox}[colframe=blue!15!white,colback=blue!5!white]
{\bf Summary of closure relations of orbits and bundles.}
Miniversal deformations can be used for studying when a closure of  an orbit (or a bundle) contains another orbit (or bundle). Such a closure relation  for orbits or bundles corresponds to changes in the canonical forms (eigenvalues, their multiplicities, minimal indices) of matrices %and matrix pencils 
under arbitrarily small perturbations. 
%Miniversal deformations can be used for analyzing changes in the canonical forms (eigenvalues, their multiplicities, minimal indices) of matrices and matrix pencils under arbitrarily small perturbations, e.g., for showing if the eigenvalues may coalesce or split apart, appear or disappear. Such changes in canonical forms mean that a closure of the corresponding orbit (or bundle) contains another orbit (or bundle). 
\end{tcolorbox}

\section{Reduction to structured perturbations.}\label{secred}

(Mini)versal deformations have or may be forced to have a certain structure, e.g., blocking, sparcity. Therefore the theory of versal deformations provides a possibility to take into account all the possible perturbations of a given matrix while working only with their versal deformations, i.e., only with particularly structured matrices. This property of versal deformations may be used in various ways. 
In particular, in this section, we show how to reduce a perturbation of monic matrix polynomial linearization to a linearization of the perturbed polynomial, or in the other words, how to find which perturbations of the matrix coefficients of a monic matrix polynomial correspond to a given perturbation of the entire linearization. Perturbed polynomial must  remain monic, i.e., the identity matrix in front of $\lambda^d$ is not perturbed. We also derive the transformation matrix that, via similarity, transforms perturbation of the linearization to the linearization of perturbed polynomial. 
The described reduction is possible since linearization of a perturbed polynomial is a versal deformation for perturbation of matrix polynomial linearization \cite{DJKV20,DLPV18,VaDe83}. 

%Below we present an algorithm that deals with monic polynomials. 
%In \cite{Dmyt22} an algorithm, inspired by the theory of versal deformations, that performs such a reduction for general (possibly non-monic) matrix polynomials is presented. Below we present a special case of such algorithm that deals with monic polynomials. We also derive the transformation matrix that, via similarity, transforms perturbation of the linearization to the linearization of perturbed polynomial.   

Let $P(\lambda) = \lambda^d + A_{d-1}\lambda^{d-1} + \ldots + A_1\lambda + A_0$, where $A_i \in \mathbb C^{n \times n}, i= 0, \ldots d-1,$ be a matrix polynomial. To compute the eigenvalues of $P(\lambda)$ it is enough to compute the eigenvalues of 
\begin{equation}
\label{comp1}
C_{P}= 
\begin{bmatrix}
-A_{d-1}&-A_{d-2}&\dots&-A_{0}\\
I_n&0&\dots&0\\
&\ddots&\ddots&\vdots\\
0&&I_n&0\\
\end{bmatrix}, 
%= \begin{bmatrix}
%0&0&\dots&-A_{0}\\
%I_n&0&\dots&-A_1\\
%&\ddots&\ddots&\vdots\\
%0&&I_n&-A_{d-1}\\
%\end{bmatrix}, 
\end{equation}
since this matrix is a linearization of the polynomial $P(\lambda)$, see e.g., \cite{MaMT15} (note also that \eqref{comp1} is similar to the first companion matrix of $P(\lambda)$ \cite[p.13, Theorem~1.1]{LaRo09}). 
We define a full perturbation of $C_{P}$ as follows 
$$
C_{P}+E=C_{P} + 
\begin{bmatrix}
E_{11}&E_{12}&\dots&E_{1d}\\
E_{21}&E_{22}&\dots&E_{2d}\\
\vdots&\vdots&\ddots&\vdots\\
E_{d1}&E_{d2}&\dots&E_{dd}\\
\end{bmatrix}.
$$
Notably, the full perturbation does not preserve the block-structure of $C_{P}$. 
Therefore we also define the structured perturbation 
$$
C_{P+F(E)}=C_{P} + 
\begin{bmatrix}
F_{d-1}&F_{d-2}&\dots&F_{0}\\
0&0&\dots&0\\
\vdots&\vdots&&\vdots\\
0&0&\dots&0\\
\end{bmatrix}. 
$$
$C_{P+F(E)}$ preserves the block-structure of $C_{P}$ and perturbs only the blocks that correspond to the matrix coefficients of the matrix polynomial $P(\lambda)$. As mentioned before, this structured perturbation is actually a versal (but not miniversal) deformation of $C_{P}$ and thus it is always possible to find a nonsingular matrix $S:=S(E)$, such that $S^{-1} \cdot \left(C_{P}+E \right) \cdot S = C_{P+F(E)}$, see e.g., \cite{DmDo17,DJKV20,DLPV18,VaDe83}. 
%Theorem 4 in \cite{DJKV20}, as well as \cite{DmDo17,DLPV18,VaDe83}. 
Below we present an algorithm for finding this structured perturbation. 

Define a split of a matrix $M=[M_{ij}]$ into a sum of its structured and unstructured parts, ${M}^s$ and ${M}^u$, respectively, as follows:   
\begin{multline*}
\begin{bmatrix}
{  M_{11}}&{  M_{12}}&{   \dots}&{  M_{1d}}\\
{  M_{21}}&{  M_{22}}&{  \dots}&{  M_{2d}}\\
{  \vdots}&{  \vdots}&{  \ddots}&{  \vdots}\\
{  M_{d1}}&{  M_{d2}}&{  \dots}&{  M_{dd}}\\
\end{bmatrix} = 
\begin{bmatrix}
{  M_{11}}&{  M_{12}}&{   \dots}&{  M_{1d}}\\
0&0&{\dots}&0\\
{\vdots}&{\vdots}&{\ddots}&{\vdots}\\
0&0&{\dots}&0\\
\end{bmatrix} +
\begin{bmatrix}
0&0&{ \dots}&0\\
{  M_{21}}&{  M_{22}}&{  \dots}&{  M_{2d}}\\
{  \vdots}&{  \vdots}&{  \ddots}&{  \vdots}\\
{  M_{d1}}&{  M_{d2}}&{  \dots}&{  M_{dd}}\\
\end{bmatrix},\\
%{ {M}^s} +{  {M}^u}, \text{ where  } \\
\text{and } \ \ { {M}^s} := 
\begin{bmatrix}
{  M_{11}}&{  M_{12}}&{   \dots}&{  M_{1d}}\\
0&0&{\dots}&0\\
{\vdots}&{\vdots}&{\ddots}&{\vdots}\\
0&0&{\dots}&0\\
\end{bmatrix}, \quad %\text{ and }
{  {M}^u} :=
\begin{bmatrix}
0&0&{ \dots}&0\\
{  M_{21}}&{  M_{22}}&{  \dots}&{  M_{2d}}\\
{  \vdots}&{  \vdots}&{  \ddots}&{  \vdots}\\
{  M_{d1}}&{  M_{d2}}&{  \dots}&{  M_{dd}}\\
\end{bmatrix}. 
\end{multline*}
%\end{aligned}
%\end{equation*}
The idea of the algorithm is based on performing similarity transformations on $C_{P} + E$ that decreases the norm of the unstructured part of the perturbation, i.e., the norm of $E^u$:
\begin{align*}
(I-X)^{-1} &(C_{P} + E) (I-X)= (I+X+X^2+X^3+ \dots ) (C_{P} + E) (I-X) \\ 
&= C_{P} + (E + X(C_{P} + E^s) - (C_{P} + E^s)X)^s \\
&\phantom{C_{P} + (E++ (E++ (} + \underbrace{ (E + X(C_{P} + E^s) - (C_{P} + E^s)X)^u}_{\text{we set it to zero, i.e., we find $X$ that eliminates $E^u$;}} +  
\underbrace{X E^u - E^u X + O(X^2).}_{\text{spoils the unstructured part again;}} \,
%X({\cal C}^1_{P} + E)X
\end{align*}
Note that the norm of the unstructured part actually decreases since the norm of $X$ is small ($(I-X)$ is a small perturbation of $I$ and the  norm of $E$ is small).  We repeat the above procedure until the norm of the unstructured part becomes sufficiently small. This is formalized in Algorithm~\ref{alg} below. 
\begin{algorithm}
\caption{(Recovering a perturbation of a monic matrix polynomial from a perturbation of its linearization)}
\label{alg}
Let ${  C}_{P}$ be a linearization of a monic matrix polynomial $P(\lambda)$ and ${E}_1$ be a full perturbation of ${  C}_{P}$. Let also $I$ be the identity matrix. 
%the linearization pencil. 
\begin{itemize}
%\item[] input: 
\item[] {Input:} Monic matrix polynomial $P(\lambda)$, perturbed matrix $ C_{P}+{ E}_1$, and the tolerance parameter ${\rm tol}$;
\vskip0.2cm
%extract the part of perturbation to be eliminated: ${\cal F} := ( C_{P} + {\cal E} -  C_{0}) \cap \GSYL^{C}$; 
\item[] {Initialization:} $S:=I$; %and ${\cal F} := - E^u $ %, and ${\cal F} := - {\cal E}^u_1 $; %( C_{P} + {\cal E} -  C_{0}) \cap \GSYL^{C}$;
\vskip0.2cm
\item[] {Computation:} While $\| {E^u_i} \|_F > {\rm tol}$ 
\begin{itemize}
\vskip0.2cm
\item {find the minimum norm least-squares solution to} the Sylvester matrix equation:  

$\big(X_i( C_{P} + E_{i}^{s}) - ( C_{P} + E_{i}^{s})X_i \big)^u=  - {E}^u_i$;
%$\left(( C_{P} + {\cal E}_{i}^{{\color{red}s}})X_i + Y_i( C_{P} + {\cal E}_{i}^{{\color{red}s}})\right)^u = - {\cal E}^u_i$;
\vskip0.2cm
\item by solving a system of linear equations with multiple right-hand sides, extract the new perturbation ${E}_{i+1}$: 
%update the perturbation of the linearization %using the strict equivalence transformation 
%$( C_{P} + {\cal E}_{i+1}):=(I+Y_i) ( C_{P} + {\cal E}_{i}) (I+X_i)$;\\
%$(I-X_i)( C_{P} + E_{i+1}):= ( C_{P} + E_{i})(I-X_i)$;
$(I-X_i)E_{i+1}= E_{i}(I-X_i) - C_{P}X_i + X_iC_{P}$;
\vskip0.2cm
\item construct the new perturbation $C_{P} + {E}_{i+1} $ of the matrix $C_{P}$ {(note that the perturbed linearization $C_{P} + E_{i+1}$ remains similar to the original one $C_{P} + E_{1}$)}; %. {\cal F} := -  ( C_{P} + {\cal E} -  C_{0}) \cap \GSYL^{C}$; 
%\item Go to step 1 with newly constructed perturbed pencil $( C_{P} + {\cal E})$
\vskip0.2cm
\item update the transformation matrix: $S_{i+1} := S_{i}(I-X_i)$; 
%$U_{i+1} := (I+Y_i)U_{i}$ and $V_{i+1} := V_{i}(I+X_i)$; \\$S_{i+1} := S_{i}(I-X_i)$
\vskip0.2cm
\item increase the counter: \ $i:=i+1$;
\end{itemize}
\vskip0.2cm
\item[] {Output:} { A structurally} perturbed linearization $ C_{P+F(E)}:= C_{P} + {E}_{k}$, where ${E}_{k}$ is a structured perturbation ({since $\|{ E}^u_{k}\|_F < {\rm tol}$}), and the transformation matrix $S$. \\
%\\ i.e. roughly ${\cal E} \in \GSYL^{C}$ (up to a given tolerance), the perturbation ${\cal E}$ is now completely in $\GSYL^{C}$;
\end{itemize}
\end{algorithm}
%\begin{equation*} 
%S^{-1} \cdot \left(C_{P(\lambda)}+E\right) \cdot S \approx C_{P(\lambda)+F(\lambda,E)}, \qquad \det S \neq 0
%\end{equation*} 

In \cite{Dmyt22} an algorithm that performs such a reduction for general (possibly non-monic) matrix polynomials and their first companion linearization is presented. Therefore, we refer the interested readers to \cite{Dmyt22} for more details and analysis of such algorithms.  

%This result is a partial case of the results of \cite{Dmyt22} where the corresponding problem is considered for the first companion linearization of any matrix polynomials (possibly non-monic).  Therefore, we refer the interested readers to \cite{Dmyt22} for more details.  
%
%In Section \ref{secconv} %we investigate the convergence of the algorithm, i.e. we show {that the unstructured part of the perturbation} tends to zero (entrywise) as the number of iterations {of our algorithm} grows; in Section \ref{secbounds} we derive a bound on the norm of the resulting structured perturbation; in Section \ref{sectr} we explain how to construct the corresponding transformation matrices, i.e. matrices that reduce a full perturbation to a structured one. 
%
We note also that the {construction of the} transformation matrices in Algorithm~\ref{alg}  is similar to the construction of the transformation matrices for the reduction to miniversal deformations of matrices in \cite{DmFS12,DmFS14}. 
\vskip0.3cm
\begin{tcolorbox}[colframe=blue!15!white,colback=blue!5!white]
{\bf Reduction to structured perturbations.} 
The theory of versal deformations provides a possibility to take into account all the possible perturbations of a given matrix while working only with its versal deformations. And since versal deformations have or may be forced to have a certain structure, e.g., blocking, sparcity, then we only need to investigate particularly structured matrices.
%(Mini)versal deformations have or may be forced to have a certain structure, e.g., blocking, sparcity. Therefore the theory of versal deformations provides a possibility to take into account all the possible perturbations of a given matrix while working only with its versal deformations, i.e., working only with particularly structured matrices. 
\end{tcolorbox}

\section*{Acknowledgements}

% The authors thank to the anonymous referee for the helpful remarks and suggestions.

This research was supported by the Swedish Research Council (VR) grant 2021-05393. The author is thankful to Froil\'an Dopico for the encouragement to write this paper. 

\footnotesize{
\bibliographystyle{abbrv}
\bibliography{library}
}
\end{document}